\newcommand{\C}{\mathbb{C}}
\newcommand{\A}{\textnormal{\textbf{A}}}
\newcommand{\B}{\textnormal{\textbf{B}}}
\newtheorem{theorem}{Theorem}[section]
\newtheorem{proposition}[theorem]{Proposition}
\newtheorem{cor}[theorem]{Corollary}
\newtheorem{lemma}[theorem]{Lemma}
\title{Classification of Complex Cyclic Leibniz Algebras}
\author{Daniel Scofield and S. McKay Sullivan}
\address{Department of Mathematics, North Carolina State University, Raleigh, NC 27695}
\email{dscofie@ncsu.edu, smsulli4@ncsu.edu}
\begin{document}

\begin{abstract}
 Since Leibniz algebras were introduced by Loday as a generalization
 of Lie algebras, there has been a lot of interest in which results
 of the latter extend to the former.  Cyclic Leibniz algebras, those
 generated by one element, are a useful tool for this purpose.  In fact, they have no Lie algebra counterpart. Their
 simple structure lends itself to elegant counterexamples to the
 extension of several important results from Lie algebras to Leibniz algebras.  In this
 paper, we give a classification of complex cyclic Leibniz algebras.
\end{abstract}

\let\thefootnote\relax\footnote{The work of the authors was supported by NSF grant DMS-0943855.}

\maketitle
 \section{Introduction}
 Cyclic Leibniz algebras were introduced in \cite{Ray:2012} and appear
 in the classification of elementary \cite{Batten:2013} and minimal
 non-elementary Leibniz algebras \cite{Ray:2013}.  They are also used
 as examples in the expository article \cite{Demir:2013}.  In this
 work we classify these algebras in the complex case.  Good references are
 \cite{Ayupov:1998}, \cite{Barnes:2011}, \cite{Demir:2013} , and \cite{Gorbat:2013}.
 
 We recall that a Leibniz algebra is an algebra in which left
 multiplication by every element is a derivation, i.e., multiplication
 satisfies $x(yz) = (xy)z+y(xz)$
 for all $x,y,z \in \A$.  Note that with the further constraint $xy
  = -yx$ this becomes the definition of a Lie algebra.  

One major difference between Leibniz and Lie algebras is that
the product of an
element with itself in a Leibniz algebra may not be zero. 
Thus it makes sense to speak of Leibniz algebras generated by a single
element. Such algebras are called cyclic Leibniz
algebras. Several interesting results about these algebras have already
been obtained. For instance, cyclic Leibniz algebras have a unique Cartan subalgebra, they
have only finitely
many maximal subalgebras, and all of these subalgebras can be explicitly
computed \cite{Ray:2012}.

\section{Basic Structure}
\label{section2}
Let $\A$ be an $n$-dimensional vector space over $\C$
containing a nonzero element $a$. Choose a linear operator $T:\A \to \A$
such that $a$ is a cyclic vector for $T$, i.e., such that
$
\mathcal{B} =\{ a,T(a),\ldots,T^{n-1}(a)\}
$
is a basis for $\A$. Then 
$
T^n(a) = \alpha_1 a + \alpha_2 T(a)+ \cdots + \alpha_n T^{n-1}(a)
$
for some $\alpha_1,\cdots,\alpha_n \in \C$. We define a product $\C a
\times \A \to \A$ as follows:
$(ca)v = cT(v)$ for all $ v \in \A$ and $c \in \C$,
i.e., such that $T$ is left multiplication by $a$. Throughout the rest of the
paper we will adobt the notation $L_a$ when referring to this $T$.  To
avoid always writing the basis elements of $\A$ in terms of $L_a$, we
let $a^k=L_a^{k-1}(a)$.
 We
aim to extend this product linearly to all of $\A \times 
\A$ in such a
way that left multiplication is a derivation, or in other words, such
that $\A$ is a Leibniz algebra. 

\begin{proposition}
\label{multder}
In the setting defined above, left multiplication is a derivation
($\A$ is a Leibniz algebra) if and only if $L_{a^2}=0$.
\end{proposition}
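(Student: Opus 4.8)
The statement splits into an immediate direction and a direction with content. Suppose first that $\A$ is a Leibniz algebra. Then left multiplication by $a$ is a derivation, so substituting $x=y=a$ into $x(yz)=(xy)z+y(xz)$ yields $a(az)=(aa)z+a(az)$, whence $(aa)z=0$ for all $z\in\A$. Since $a^2=L_a(a)=aa$, this is exactly $L_{a^2}=0$. (Equivalently, the Leibniz identity is the same as the operator identity $L_{xy}=[L_x,L_y]$, so $L_{a^2}=L_{aa}=[L_a,L_a]=0$ holds in every left Leibniz algebra.)

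For the converse, assume $L_{a^2}=0$; I would first argue that a Leibniz extension, if it exists, is forced and is trivial ``above $a$''. For $2\le k\le n$ we have $a^k=a\cdot a^{k-1}=L_a(a^{k-1})$, so the identity $L_{xy}=[L_x,L_y]$ gives $L_{a^k}=[L_a,L_{a^{k-1}}]$; starting from $L_{a^2}=0$ and inducting on $k$, this forces $L_{a^k}=0$ for every $k\ge2$. Hence the only possible extension is the bilinear product with $a\cdot v=L_a(v)$ and $a^i\cdot v=0$ for $i\ge2$; explicitly, $x\cdot y=x_1\,L_a(y)$, where $x_1$ is the coefficient of $a$ in $x$. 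It then remains to verify that this product does satisfy the Leibniz identity.

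By bilinearity it suffices to check $x(yz)=(xy)z+y(xz)$ for $x,y,z$ ranging over the basis $\{a,a^2,\dots,a^n\}$. When $x=a^i$ with $i\ge2$ both sides are zero because $L_{a^i}=0$, so only $x=a$ requires attention; substituting $L_a=T$ and using $L_{a^k}=0$ for $k\ge2$, nearly every term cancels at once. The exception is the terms that feed through the cyclic relation $a\cdot a^n=L_a^{\,n}(a)=\alpha_1a+\alpha_2a^2+\cdots+\alpha_na^n$, i.e. the cases $y=a^n$ and $z=a^n$, which collapse to a short identity among the scalars $\alpha_i$. Confirming that this identity holds is the heart of the matter: it is the one place where the cyclic structure and the Leibniz identity genuinely interact, and where the hypothesis $L_{a^2}=0$ (equivalently, $L_{a^k}=0$ for all $k\ge2$) is used. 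I expect this wrap-around bookkeeping to be the main obstacle; the remainder of the proof is forced or routine.
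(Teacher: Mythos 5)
Your forward direction is fine and coincides with the paper's ($L_{a^2}=[L_a,L_a]=0$). The converse, however, is not a proof: you reduce everything to ``a short identity among the scalars $\alpha_i$'' arising from the wrap-around cases and then stop, saying only that you expect it to hold. If you carry out that check, it does \emph{not} hold automatically. Take $x=a$, $y=a^n$, $z=a^j$ with $j<n$: the left side $a(a^na^j)$ vanishes because $L_{a^n}=0$, while the right side is $(aa^n)a^j+a^n(aa^j)=L_{a^{n+1}}(a^j)+0=\alpha_1a^{j+1}$, since $a^{n+1}=\alpha_1a+\alpha_2a^2+\cdots+\alpha_na^n$ and only its $a$-component acts nontrivially on the left. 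So the identity you deferred is precisely $\alpha_1=0$, which is an extra condition on $T$ and is not a consequence of $L_{a^2}=0$: for $n=2$, $T(a)=a^2$, $T(a^2)=a$, the product with $L_a=T$ and $L_{a^2}=0$ satisfies your hypotheses, yet $L_a(a^2\cdot a)=0$ while $L_a(a^2)\cdot a+a^2\cdot L_a(a)=a\cdot a=a^2\neq0$, so left multiplication is not a derivation. Once $\alpha_1=0$ is imposed, the wrap-around terms all die (then $L_{a^{n+1}}=\sum_{i\ge2}\alpha_iL_{a^i}=0$, so $a^{n+1}$ behaves like every other $a^i$ with $i\ge2$) and the remaining verification is the routine one you describe.

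To your credit, you put your finger on exactly the delicate spot: the paper's own proof asserts that both sides of $L_a(a^ia^j)=L_a(a^i)a^j+a^iL_a(a^j)$ vanish whenever $i>1$, which is false for $i=n$ unless $\alpha_1=0$, and the paper only deduces $\alpha_1=0$ \emph{after} the proposition, from the Leibniz hypothesis. So the gap in your write-up is genuine on two counts: you did not verify the one identity you yourself called the heart of the matter, and that identity is not a formality that could have been waved through --- it fails in general, and the converse direction needs $\alpha_1=0$ supplied (or proved, which $L_{a^2}=0$ alone cannot do).
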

\begin{proof}
Assume that left multiplication is a derivation. Then it is easy to
check that  $L_{a^2} = [L_a,L_a] = 0$, where $[\cdot,\cdot]$ is the
commutator bracket of the Lie algebra Der$(A)$.

Now assume $L_{a^2} = 0$. By definition $L_a^j(x) = L_a(L_a^{j-1}(x))$
for all $j\geq 2$ and for all $x \in \A$. So by induction $L_a^j =
0$ for all $j\geq 2$. Now let $x = c_1 a + c_2
a^2 + \cdots + c_n a^n \in \A$. Then by linearity and the fact
that $L_{a^j} = 0$ for all
$j \geq 2$, we have $L_x = c_1 L_a$. Thus it is enough to show that $L_a$ is a
derivation on $\A$. Since $\A$ has basis $\{a, a^2, \ldots, a^n \}$, we
need only check that 
\begin{equation}
\label{deronA}
L_a(a^ia^j) = L_a(a^i)a^j + a^iL_a(a^j) \ \ \ \ \text{ for all } 1
\leq i,j, \leq n.
\end{equation}
Both sides of (\ref{deronA}) are zero when $i>1$ and $a^{j+2}$ when
$i=1$. Thus $L_a$ is a derivation. 
\end{proof}

We will make use of the following consequence of  Proposition
\ref{multder}: if $\A$ is Leibniz,
then
 $
 0=a^{n+1}a = (\alpha_1a + \alpha_2 a^2 + \cdots+ \alpha_na^n)a = \alpha_1a^2.
 $
 Thus $\alpha_1 = 0$.

Of course, all of what we have said so far is well known. We could have just started by saying that a cyclic Leibniz algebra is
an algebra generated by a single element $a$ such that left
multiplication by $a$ is a derivation. We chose to include
the preceding details because the underlying vector space structure is
the heart of the following classification.
\section{Classification of Cyclic Leibniz Algebras over $\C$}

Let $\A$ be an $n$-dimensional cyclic Leibniz algebra over $\C$  generated by a single element
$a$.  If $aa^n = 0$, then $\A$ is the nilpotent cyclic Leibniz algebra
of dimension $n$ of which there is only one up to isomorphism.  Throughout the rest of this paper, we consider only non-nilpotent
cyclic Leibniz algebras, i.e., cyclic Leibniz algebras  where the generator $a$ satisfies
\begin{equation}
\label{multfora}
aa^n = \alpha_k a^k + \alpha_{k+1}a^{k+1}+ \cdots + \alpha_n a^n
\end{equation}
for some $2 \leq k \leq n$ and $\alpha_k \neq 0$.

From the discussion in the previous section, it is clear that
any choice of $\alpha_2,\ldots,\alpha_n$ defines a  cyclic Leibniz
algebra. However, differing choices of these
coefficients do not always yield non-isomorphic algebras.  A simple example is when $\A$ is 2-dimensional and
$aa^2=\alpha a^2$ with $\alpha \neq 0,1$. Let
$x=\frac{1}{\alpha}a$. Then clearly $x$ is a cyclic generator for $\A$
and 
\begin{equation}
\label{example1}
xx^2 = \frac{1}{\alpha^3}aa^2 = \frac{1}{\alpha^2}a^2 = x^2 \neq
\alpha x^2.
\end{equation}
Thus $\A$ itself has generators whose multiplications are
different.

We consider the question of when two cyclic Leibniz algebras of
the same dimension are isomorphic.
\begin{lemma}
\label{isomorphism}
Let $\A$ and $\B$ be two cyclic Leibniz algebras of dimension $n$.  Assume
$\A$ has a cyclic generator $a$ which satisfies
$
aa^n = \alpha_k a^k + \alpha_{k+1}a^{k+1} +\cdots+ \alpha_n a^n.
$
Then $\A$ is isomorphic to $\B$ if and only if $\B$ has a
cyclic generator $b$ which satisfies
$
%\label{multforb}
bb^n = \alpha_k b^k + \alpha_{k+1}b^{k+1} +\cdots+ \alpha_n b^n.
$
\end{lemma}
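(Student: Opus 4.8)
The plan is to exploit the observation, implicit in Section~\ref{section2}, that the entire multiplication of a non-nilpotent cyclic Leibniz algebra is determined by the dimension $n$ together with the tuple $(\alpha_2,\ldots,\alpha_n)$ in $aa^n=\sum_i\alpha_i a^i$ (recall $\alpha_1=0$ is forced): for $x=\sum_i c_i a^i$ and $y=\sum_j d_j a^j$ one has $xy=L_x(y)=c_1 L_a(y)=c_1\sum_j d_j\,a^{j+1}$, where $a^{j+1}=L_a(a^j)$ for $j<n$ and $a^{n+1}:=aa^n$. From this viewpoint the lemma just says that an isomorphism of two such algebras is exactly a change of cyclic generator that preserves this tuple, so the proof reduces to transporting the defining relation across the appropriate linear map in each direction.

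For the reverse direction, assume $\B$ has a cyclic generator $b$ with $bb^n=\alpha_k b^k+\cdots+\alpha_n b^n$. Define $\phi\colon\A\to\B$ on the basis $\{a,a^2,\ldots,a^n\}$ by $\phi(a^i)=b^i$ and extend linearly; since $\{b,b^2,\ldots,b^n\}$ is a basis of $\B$, $\phi$ is a linear isomorphism. To see it is an algebra map it suffices, by bilinearity, to check $\phi(a^ia^j)=\phi(a^i)\phi(a^j)$ on basis elements. When $i\ge 2$ both sides vanish because $L_{a^i}=L_{b^i}=0$ by Proposition~\ref{multder}; when $i=1$ and $j<n$ both sides equal $b^{j+1}$; and when $i=1,\ j=n$ the left side is $\phi(\sum_i\alpha_i a^i)=\sum_i\alpha_i b^i$ while the right side is $bb^n$, and these agree by the hypothesis on $b$. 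Hence $\phi$ is an isomorphism.

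For the forward direction, suppose $\psi\colon\A\to\B$ is an algebra isomorphism and set $b=\psi(a)$. An easy induction using $a^i=aa^{i-1}$ and the homomorphism property gives $\psi(a^i)=b^i$ for all $i$, where $b^i:=L_b^{i-1}(b)$; since $\psi$ carries the basis $\{a,\ldots,a^n\}$ onto $\{b,\ldots,b^n\}$, the latter is a basis, so $b$ is indeed a cyclic generator of $\B$. Applying $\psi$ to $aa^n=\sum_i\alpha_i a^i$ and using $\psi(a^i)=b^i$ yields $bb^n=\sum_i\alpha_i b^i$, the required relation.

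I expect the argument to be essentially bookkeeping, with no genuine obstacle. The one place that needs care is the forward direction: one must verify that the image $b=\psi(a)$ of the generator is again a cyclic generator (so that the conclusion even makes sense) and that $\psi$ intertwines the iterated left multiplications so that $\psi(a^i)=b^i$ — equivalently, that the symbol $b^i$ appearing in the statement really denotes $L_b^{i-1}(b)$ and matches $\psi(a^i)$. Once this identification is in place, pushing the defining relation through $\psi$ (and pulling the model structure back through $\phi$ in the converse) is immediate.
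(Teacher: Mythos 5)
Your proof is correct and follows essentially the same route as the paper's: construct $f(a^i)=b^i$ and check it on basis products for the reverse direction, and push the relation through the isomorphism after verifying $\psi(a^i)=b^i$ for the forward direction. You are in fact slightly more careful than the paper in spelling out the vanishing of the products $a^ia^j$ for $i\geq 2$ and in justifying that $b=\psi(a)$ is again a cyclic generator.
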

\begin{proof}
Suppose there is an isomorphism of
algebras $f:\A \to \B$. Then $b=f(a)$ satisfies $bb^n =
\alpha_kb^k + \alpha_{k+1}b^{k+1} + \cdots + \alpha_n b^n$.  Clearly $b$ generates $B$, since $f$ is also a vector space isomorphism.

For the other implication, suppose $\B$ has a generator $b$ with the
above multiplication. Let $f:\A \rightarrow \B$ be the vector space
isomorphism that sends $a^i \mapsto b^i$ for $1 \leq i \leq n$. To
show that this is a homomorphism of Leibniz algebras, we find that
$f(aa^i) = f(a^{i+1}) = b^{i+1} = bb^i = f(a)f(a^i)$ for $1 \leq i <
n$ and $f(aa^n) = f(\alpha_k a^k + \alpha_{k+1}a^{k+1} +\cdots+
\alpha_n a^n) = \alpha_k f(a^k) + \alpha_{k+1}f(a^{k+1}) +\cdots+
\alpha_n f(a^n) = \alpha_k b^k + \alpha_{k+1}b^{k+1} +\cdots+ \alpha_n
b^n = bb^n = f(a)f(a^n)$. From this we see that $f$ respects all
non-zero products in $\A$. Thus $f$ is an isomorphism of Leibniz algebras.
\end{proof}

Given a cyclic Leibniz algebra $\A$ of dimension $n$ with a generator
$a$ satisfying (\ref{multfora}), we aim to find the isomorphism class
of $\A$. By Lemma \ref{isomorphism}, it is enough to find all possible coefficients
 $\gamma_2,\ldots,\gamma_n \in \C$ such that there exists a generator
 $x \in \A$ satisfying
$
xx^n = \gamma_2 x^2 + \gamma_3x^3 + \cdots + \gamma_n x^n.
$

% In the basis $\mathcal{B} = \{a,a^2,\ldots,a^n \}$, the linear
% operator $L_a$ has matrix\\
% \[
% L_a = \left[\begin{array}{ccccccc}
% 0 & 0 & \cdots& 0 & \cdots & 0 & 0\\
% 1 & 0 & \cdots& 0 & \cdots & 0 & 0 \\
% 0 & 1 & \cdots& 0 &\cdots & 0 & 0 \\
% \vdots & \vdots & \ddots& \vdots & \ddots & \vdots & \vdots\\
% 0 & 0 & \cdots & 1 & \cdots &0 & \alpha_k\\
% \vdots & \vdots & \ddots & \vdots & \ddots & \vdots & \vdots\\
% 0 & 0 & \cdots & 0 & \cdots & 1 & \alpha_n\\
% \end{array}\right],
% \]
% \newline
% which is the companion matrix for the 

Since $a$ is a cyclic vector for $L_a$, it follows that $L_a$ has characteristic polynomial
$
f(t) = t^n-\alpha_nt^{n-1}-\alpha_{n-1}t^{n-2}- \cdots - \alpha_k t^{k-1}.
$
By the Cayley-Hamilton theorem $f(L_a) = 0$.  In other
words
\begin{equation}
\label{cayley}
(L_a^n-\alpha_n L_a^{n-1}-\alpha_{n-1}L_a^{n-2}- \cdots - \alpha_k
L_a^{k-1})(x) = 0
\end{equation}
for all $x \in \A$. 
Now let us assume $x$ is a cyclic generator and write $x$ in terms of the
basis $\mathcal{B}$:
 $
x = c_1a + c_2 a^2 + \cdots + c_n a^n.
$
By rearranging
(\ref{cayley}) we obtain
\begin{equation*}
%\label{mult}
L_a^n(x) = \alpha_kL_a^{k-1}(x)+ \alpha_{k+1}L_a^{k}(x)+\cdots +
\alpha_n L_a^{n-1}(x).
\end{equation*}
Note that $c_1 \neq 0$ else $x$ is not a cyclic generator for $\A$. We
multiply by $c_1^n$ which gives
\[
c_1^nL_a^n(x) = c_1^n\alpha_kL_a^{k-1}(x)+ c_1^n\alpha_{k+1}L_a^{k}(x)+\cdots +
c_1^n\alpha_n L_a^{n-1}(x).
\]
From the proof of Proposition \ref{multder}, we know that $L_x
= c_1 L_a$.  Then
\[
L_x^n(x) = c_1^{n-k+1}\alpha_kL_x^{k-1}(x)+ c_1^{n-k}\alpha_{k+1}L_x^{k}(x)+\cdots +
c_1\alpha_n L_x^{n-1}(x),
\]
which we may also write as
\begin{equation}
\label{multforx}
xx^n = c_1^{n-k+1}\alpha_kx^k+ c_1^{n-k}\alpha_{k+1}x^{k+1}+\cdots +
c_1\alpha_n x^n.
\end{equation}
Thus every generator for $\A$ satisfies (\ref{multforx}).  Since
$x=c_1 a$ is a generator for all $c_1 \neq 0$, there is at least one
cyclic generator for $\A$ satisfying (\ref{multforx}) for every $c_1 \neq 0$.
We have proven the following lemma:
\begin{lemma}
\label{multtype}
Let $\A$ be an $n$-dimensional cyclic Leibniz algebra with generator
$a$ satisfying \textnormal{(\ref{multfora})}.  Then 
\begin{enumerate}
\item[(i)] Every generator $x = c_1a + \cdots +c_na^n$ for $\A$ satisfies
\begin{equation}
\label{multforx2}
xx^n = c_1^{n-k+1}\alpha_kx^k+ c_1^{n-k}\alpha_{k+1}x^{k+1}+\cdots +
c_1\alpha_n x^n.
\end{equation}

\item[(ii)] $\A$ has at least one generator satisfying
  \textnormal{(\ref{multforx2})} for each $c_1 \neq 0$.
\end{enumerate}
\end{lemma}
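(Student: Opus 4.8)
The plan is to obtain the multiplication rule for an arbitrary cyclic generator $x$ as a direct consequence of the Cayley--Hamilton theorem applied to $L_a$, together with the normalization $L_x = c_1 L_a$ recorded in the proof of Proposition \ref{multder}. Since $a$ is a cyclic vector for $L_a$ and $L_a^n(a) = a a^n = \alpha_k a^k + \cdots + \alpha_n a^n$ by \textnormal{(\ref{multfora})}, the matrix of $L_a$ in the basis $\mathcal{B}$ is a companion matrix, so its characteristic polynomial is $f(t) = t^n - \alpha_n t^{n-1} - \cdots - \alpha_k t^{k-1}$. Cayley--Hamilton then gives $f(L_a) = 0$, i.e.\ $L_a^n(x) = \alpha_k L_a^{k-1}(x) + \cdots + \alpha_n L_a^{n-1}(x)$ for every $x \in \A$.

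For (i), take an arbitrary generator $x = c_1 a + c_2 a^2 + \cdots + c_n a^n$. First note $c_1 \neq 0$: the subspace $\Span{a^2, \ldots, a^n}$ is a proper ideal of $\A$ --- it is stable under $L_a$ because $a^{n+1} \in \Span{a^k, \ldots, a^n}$, and it is annihilated by every right multiplication $L_{a^i}$ with $i \geq 2$ --- so an element with $c_1 = 0$ generates a proper subalgebra and cannot be cyclic. Since $L_x = c_1 L_a$, substitute $L_a = c_1^{-1} L_x$ into the Cayley--Hamilton identity evaluated at $x$ and multiply through by $c_1^{\,n}$; rewriting $L_x^{j-1}(x) = x^j$ and $L_x^n(x) = x x^n$ then yields \textnormal{(\ref{multforx2})}. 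For (ii), observe that for each $c_1 \neq 0$ the scalar multiple $x = c_1 a$ is again a cyclic generator of $\A$ with vanishing higher coefficients $c_2, \ldots, c_n$, so \textnormal{(\ref{multforx2})} holds for this generator with exactly this value of $c_1$; hence every nonzero $c_1$ is realized.

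The only point needing care is the bookkeeping of the powers of $c_1$. Using $L_x^m = c_1^m L_a^m$, the term $\alpha_j L_a^{j-1}(x)$ becomes $\alpha_j c_1^{-(j-1)} L_x^{j-1}(x)$ while $L_a^n(x) = c_1^{-n} L_x^n(x)$, so after multiplying by $c_1^{\,n}$ the coefficient of $x^j$ is $c_1^{\,n-(j-1)}\alpha_j = c_1^{\,n-j+1}\alpha_j$ for $k \le j \le n$, which matches the exponents $n-k+1, n-k, \ldots, 1$ appearing in the statement. Apart from this indexing verification the argument is routine, so I do not expect any genuine obstacle: the lemma is essentially Cayley--Hamilton combined with the identity $L_x = c_1 L_a$.
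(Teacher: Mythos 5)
Your proof is correct and takes essentially the same route as the paper's: both apply the Cayley--Hamilton theorem to the companion matrix of $L_a$, substitute $L_x = c_1 L_a$ into the resulting identity evaluated at $x$, multiply by $c_1^n$ to obtain (\ref{multforx2}), and establish (ii) by taking the generators $x = c_1 a$. Your justification that $c_1 \neq 0$ (via the proper ideal $\mathrm{span}\{a^2,\ldots,a^n\}$) supplies a detail the paper merely asserts.
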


We call an $n$-dimensional non-nilpotent cyclic Leibniz
algebra $\A$ a \textit{type $k$} cyclic Leibniz algebra if $\A$ has a generator $x$ with multiplication
\begin{equation}
\label{type}
xx^n = x^k + \gamma_{k+1}x^{k+1}+\cdots+\gamma_n x^n
\end{equation}
for some ordered $(n-k)$-tuple $(\gamma_{k+1},\ldots,\gamma_n) \in
\C^{n-k}$.  
\begin{lemma} 
\label{typek}
Every $n$-dimensional non-nilpotent cyclic Leibniz algebra is of type $k$ for one and
only one $\displaystyle k \in \{2,\ldots,n\}$.
\end{lemma}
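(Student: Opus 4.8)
The plan is to read off both halves of the statement directly from Lemma \ref{multtype}, which already determines the form of $xx^n$ for \emph{every} generator $x$ of $\A$ in terms of one fixed generator $a$. Since $\A$ is non-nilpotent, it has a generator $a$ satisfying (\ref{multfora}), say with smallest surviving index $k$ and $\alpha_k \neq 0$. By Lemma \ref{multtype}(i) the generator $x = c_1 a$ (any $c_1 \neq 0$) satisfies $xx^n = c_1^{n-k+1}\alpha_k x^k + c_1^{n-k}\alpha_{k+1}x^{k+1} + \cdots + c_1 \alpha_n x^n$. Because we are over $\C$, pick $c_1$ to be an $(n-k+1)$-st root of $1/\alpha_k$; then $c_1^{n-k+1}\alpha_k = 1$, so this $x$ realizes the multiplication (\ref{type}) and $\A$ is of type $k$. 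This gives existence.

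For uniqueness, suppose $\A$ is also of type $m$, witnessed by a generator $y$ with $yy^n = y^m + \gamma_{m+1}y^{m+1} + \cdots + \gamma_n y^n$. Keeping $a$ fixed as above, write $y = c_1 a + c_2 a^2 + \cdots + c_n a^n$ with $c_1 \neq 0$; Lemma \ref{multtype}(i) applied to $y$ yields a second expansion $yy^n = c_1^{n-k+1}\alpha_k y^k + c_1^{n-k}\alpha_{k+1}y^{k+1} + \cdots + c_1 \alpha_n y^n$. Since $y$ is cyclic, $\{y, y^2, \ldots, y^n\}$ is a basis of $\A$, so these two expansions of $yy^n$ must agree coefficient by coefficient. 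In the first, the smallest index with nonzero coefficient is $m$; in the second it is $k$, since $c_1^{n-k+1}\alpha_k \neq 0$ while the coefficients of $y^2, \ldots, y^{k-1}$ all vanish. Hence $m = k$, and the type is unique.

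There is no real obstacle here once Lemma \ref{multtype} is in hand; the only point requiring care is that the notion of \emph{type} is defined with the normalization that the coefficient of $x^k$ equals $1$ with no lower power present, so uniqueness comes down to checking that the position $k$ of the lowest surviving term of $xx^n$ is the same for \emph{every} generator $x$ — precisely the coefficient comparison carried out above. The hypothesis that the field is $\C$ (or merely large enough to contain the needed roots) is used only in the existence half, to rescale the leading coefficient to $1$; the uniqueness argument is field-independent. As a byproduct, the index $k$ produced in the existence step does not depend on the choice of the initial generator $a$, since uniqueness forces it to be the same for all of them.
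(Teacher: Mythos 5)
Your proof is correct and follows essentially the same route as the paper: existence by rescaling the generator $a$ by an $(n-k+1)$-st root of $1/\alpha_k$ so that the leading coefficient becomes $1$, and uniqueness by noting that Lemma \ref{multtype}(i) forces the lowest surviving index of $xx^n$ to be $k$ for every generator $x$. Your uniqueness step merely spells out the coefficient comparison (using that $\{y,\ldots,y^n\}$ is a basis) that the paper leaves implicit.
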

\begin{proof}
Let $\A$ be a non-nilpotent cyclic Leibniz algebra having a generator
$a$ with multiplication as in (\ref{multfora}).  Let $x = c_1a$ where 
$\displaystyle
c_1 =\alpha_k^{\frac{1}{k-n-1}}.
$
Then by Lemma \ref{multtype} part \textit{(i)}, $x$ is a generator satisfying (\ref{type}) with 
$
\gamma_{k+i} = c_1^{n-k+1-i}\alpha_{k+i}
$.
Thus $\A$ is
of type $k$ for at least one $k$.  That this $k$ is unique again follows
immediately from part \textit{(i)} of Lemma \ref{multtype}, since
$c_1\neq 0$ and $\alpha_k \neq 0$ imply that $c_1^{n-k+1}\alpha_k \neq
0$.
\end{proof}
By Lemma \ref{typek}, we know that any non-nilpotent cyclic Leibniz
algebra of type $k$ has a generator satisfying (\ref{type}) for some
$(n-k)$-tuple $(\gamma_{k+1},\ldots,\gamma_n)$. The question remains as
to whether $\A$ can also have a generator satisfying (\ref{type}) for
some other $(n-k)$-tuple $(\gamma_{k+1}',\ldots,\gamma_n')$.

Let $d=n-k$. We define the following relation on $\C^{d}$.  We say
$(\gamma_1,\gamma_2,\ldots,\gamma_{d}) \sim (\gamma'_1,\gamma'_2,\ldots,\gamma'_{d})$ if
$
(\gamma_1,\gamma_2,\ldots,\gamma_{d}) =  (\omega^{d}\gamma'_1,\omega^{d-1} \gamma'_2,\ldots,\omega\gamma'_{d})
$
for some $(d+1)$-th root of unity $\omega$. One may easily check
that $\sim$ is an equivalence relation on $\C^{d}$.  Then the
equivalence classes are of the form
\[
[(\gamma_1,\gamma_2,\ldots,\gamma_{d})] =
\{(\omega^{d}\gamma_1,\omega^{d-1}\gamma_2,\ldots,\omega\gamma_d)\text{
  }|
\text{ }\omega \text{ is an }(d+1)\text{-th root of unity}\}.
\]
Then we have the following lemma.
\begin{lemma}
\label{similar}
 Fix $k \in \{2, \ldots, n \}$ and let
  $(\gamma_{k+1},\ldots,\gamma_n) \in (\C^{n-k}) \setminus 0$. Let $\A$
  be a cyclic Leibniz algebra of dimension $n$ containing a generator $x$
  such that
$
xx^n = x^k + \gamma_{k+1}x^{k+1}+ \cdots + \gamma_n x^n.
$
Then $\A$ also has a generator $y$ such that
$
yy^n = y^k + \gamma'_{k+1}y^{k+1} + \cdots +\gamma'_n y^n
$
if and only if $(\gamma'_{k+1},\ldots, \gamma'_n) \sim
(\gamma_{k+1},\ldots, \gamma_n),$ where $\sim$ is the equivalence
relation on $\C^{n-k}$ defined above.
\end{lemma}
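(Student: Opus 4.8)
The plan is to reduce the whole statement to Lemma \ref{multtype}, applied with the generator $x$ playing the role of $a$. Being ``a cyclic Leibniz algebra together with a chosen generator satisfying (\ref{multfora})'' is a property shared by $(\A,x)$, with the coefficients now being $\alpha_k = 1$ and $\alpha_{k+i} = \gamma_{k+i}$ for $1 \le i \le d$, where $d := n-k$ (and $d \ge 1$ since the tuple $(\gamma_{k+1},\dots,\gamma_n)$ is assumed nonzero). The one point worth setting up first is pure index bookkeeping: in (\ref{multforx2}) the coefficient of $x^{k+i}$ is $c_1^{\,d+1-i}\alpha_{k+i}$ for $0 \le i \le d$, so as $i$ runs from $0$ to $d$ the exponents of $c_1$ run through $d+1, d, d-1, \dots, 1$. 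After the leading term (exponent $d+1$) this is exactly the exponent pattern $\omega^{d},\omega^{d-1},\dots,\omega$ occurring in the definition of $\sim$.

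For the ``if'' direction, assume $(\gamma'_{k+1},\dots,\gamma'_n) \sim (\gamma_{k+1},\dots,\gamma_n)$ and pick a $(d+1)$-th root of unity $\omega$ with $\gamma'_{k+i} = \omega^{\,d+1-i}\gamma_{k+i}$ for $1 \le i \le d$. By Lemma \ref{multtype}(ii) there is a generator $y$ of $\A$ whose expansion in the basis $\{x,x^2,\dots,x^n\}$ has leading coefficient $c_1 = \omega$ (concretely, $y = \omega x$ does the job), and by part (i) of the same lemma
\[
yy^n = \omega^{\,d+1} y^k + \omega^{\,d}\gamma_{k+1} y^{k+1} + \cdots + \omega\,\gamma_n y^n .
\]
Since $\omega^{d+1}=1$ and $\omega^{\,d+1-i}\gamma_{k+i} = \gamma'_{k+i}$, the right-hand side is precisely $y^k + \gamma'_{k+1}y^{k+1} + \cdots + \gamma'_n y^n$, so $y$ is the required generator.

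For the ``only if'' direction, suppose $\A$ has a generator $y$ with $yy^n = y^k + \gamma'_{k+1}y^{k+1} + \cdots + \gamma'_n y^n$. Write $y = c_1 x + c_2 x^2 + \cdots + c_n x^n$; because $y$ is a cyclic generator we have $c_1 \ne 0$ (otherwise $L_y = c_1 L_x$ vanishes, as in the proof of Proposition \ref{multder}, and $y$ cannot generate $\A$) and $\{y,y^2,\dots,y^n\}$ is again a basis of $\A$. Lemma \ref{multtype}(i) gives $yy^n = c_1^{\,d+1} y^k + c_1^{\,d}\gamma_{k+1}y^{k+1} + \cdots + c_1\,\gamma_n y^n$, and comparing the coordinates of $yy^n$ in the linearly independent set $\{y^k,\dots,y^n\}$ forces $c_1^{\,d+1} = 1$ and $\gamma'_{k+i} = c_1^{\,d+1-i}\gamma_{k+i}$ for $1 \le i \le d$. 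Setting $\omega = c_1$, a $(d+1)$-th root of unity, this says exactly $(\gamma'_{k+1},\dots,\gamma'_n) \sim (\gamma_{k+1},\dots,\gamma_n)$.

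I do not expect a genuine obstacle; the only thing requiring care is the arithmetic of indices — matching the exponents $n-k+1-i$ from Lemma \ref{multtype} against the exponents $d,d-1,\dots,1$ in the definition of $\sim$, and observing that the congruence condition $c_1^{\,d+1} = 1$ is exactly what renormalizes the coefficient of the lowest-degree term $y^k$ back to $1$.
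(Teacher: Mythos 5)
Your proof is correct and follows essentially the same route as the paper's: the ``only if'' direction applies Lemma \ref{multtype}(i) to $y=c_1x+\cdots+c_nx^n$ and compares coefficients to force $c_1^{n-k+1}=1$, and the ``if'' direction takes $y=\omega x$. The only difference is that you spell out the index bookkeeping and the linear-independence justification for comparing coefficients a bit more explicitly than the paper does.
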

\begin{proof}
Assume $y$ satisfies the equation given above and write $y = c_1x+c_2x^2 + \cdots + c_n x^n$.  Then Lemma
\ref{multtype} says that $c_1^{n-k+1} = 1$.  Then $c_1$ is an
$(n-k+1)$-th root of unity, and by Lemma \ref{multtype} part
\textit{(i)} we have
$
yy^n = y^k + c_1^{n-k}\gamma_{k+1}y^{k+1}+\cdots + c_1 \gamma_n y^n.
$
Thus $(\gamma_{k+1},\ldots,\gamma_n) \sim
(\gamma'_{k+1},\ldots,\gamma'_n).$

For the other implication, let $(\gamma'_{k+1},\ldots,\gamma'_n) \in
[(\gamma_{k+1},\ldots,\gamma_n)]$.  Then 
$
(\gamma'_{k+1},\ldots, \gamma'_n) = (\omega^{n-k}\gamma_{k+1},\ldots,\omega \gamma^n)
$
for some $(n-k+1)-th$ root of unity $\omega$.  Then the generator $y = \omega x$ satisfies
$
yy^n = y^k + \omega^{n-k}\gamma_{k+1}y^{k+1}+\cdots + \omega \gamma_n
y^n= y^k + \gamma'_{k+1}y^{k+1}+ \cdots + \gamma'_n y^n.
$
\end{proof}
We have shown that there is a one-to-one correspondence between the 
isomorphism classes of non-nilpotent $n$-dimensional cyclic Leibniz algebras of type
$k$ and the nonzero elements of $\C^{n-k}/\sim$. More precisely, we
have the following classification:
\begin{theorem}[Classification]
\label{classificationtheorem}
Let $\A$ be an $n$-dimensional cyclic Leibniz algebra over $\C$. Then
$\A$ is isomorphic to a Leibniz algebra spanned by
$\{a,a^2,\ldots,a^n\}$ with the product $aa^n$ given by one and only one of the following:
\begin{enumerate}
\item[(1)]{
$aa^n = 0$ (nilpotent case).
}
\item[(2)]{
$aa^n = a^n$.
}
\item[(3)]{$aa^n = a^k + \alpha_{k+1}a^{k+1}+\cdots + \alpha_na^n, \ \
    \ \ $ $2\leq k \leq n-1$, $ \ \ \ \ (\alpha_{k+1}, \ldots, \alpha_n) \in \C^{n-k}/\sim$.}
\end{enumerate}

\end{theorem}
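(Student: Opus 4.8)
The plan is to assemble the theorem from Proposition \ref{multder} and Lemmas \ref{isomorphism}, \ref{multtype}, \ref{typek}, and \ref{similar}; essentially no new computation is required, only careful tracking of the existence and uniqueness claims. The first point to record is that the multiplication of a cyclic Leibniz algebra with chosen generator $a$ is completely determined by the single product $aa^n$: by definition $aa^i = a^{i+1}$ for $1 \le i < n$, every $a^i a^j$ with $i \ge 2$ vanishes since $L_{a^i} = 0$ (Proposition \ref{multder} and its proof), and $aa^n \in \Span{a^2,\ldots,a^n}$ because $\alpha_1 = 0$. So ``isomorphic to the algebra spanned by $\{a,\ldots,a^n\}$ with $aa^n = \cdots$'' is a genuine normal form, and the theorem amounts to saying every such algebra falls into exactly one of the three listed families.

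Next I would split on nilpotency. If $aa^n = 0$, then $\A$ is the nilpotent cyclic Leibniz algebra of dimension $n$, unique up to isomorphism, which is case (1). Otherwise a generator of $\A$ may be taken to satisfy (\ref{multfora}), and Lemma \ref{typek} produces a unique $k \in \{2,\ldots,n\}$ together with a generator $x$ satisfying $xx^n = x^k + \gamma_{k+1}x^{k+1} + \cdots + \gamma_n x^n$. If $k = n$ the tuple $(\gamma_{k+1},\ldots,\gamma_n)$ is empty and $xx^n = x^n$, which is case (2). If $2 \le k \le n-1$, the $(n-k)$-tuple is recorded and we are in case (3).

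The remaining work is the isomorphism bookkeeping for case (3) and the verification that the families are pairwise disjoint. Fix $k$ with $2 \le k \le n-1$. Lemma \ref{isomorphism} says two type-$k$ algebras whose generators realize tuples $\gamma$ and $\gamma'$ are isomorphic precisely when one of them also admits a generator realizing the other's tuple, while Lemma \ref{similar} identifies the set of tuples a given algebra realizes as exactly the $\sim$-class of $\gamma$ (the degenerate tuple $0$ being handled directly by Lemma \ref{multtype}). Together these make $[\gamma] \mapsto (\text{isomorphism class})$ a well-defined bijection from $\C^{n-k}/\sim$ onto the isomorphism classes of $n$-dimensional type-$k$ cyclic Leibniz algebras, which is the correspondence claimed in (3). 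For disjointness: an algebra isomorphism carries a cyclic generator to a cyclic generator satisfying the identical product relation, so it preserves nilpotency (separating (1) from (2) and (3)) and, among non-nilpotent algebras, preserves the type number $k$ of Lemma \ref{typek} (separating (2), where $k = n$, from (3), where $k \le n-1$, and separating distinct values of $k$ inside (3)); within a fixed $k$, distinct $\sim$-classes are separated by the bijection just established. This yields the ``one and only one.''

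I expect the only step that is more than a formal quotation of earlier results to be the last one: one must observe that an isomorphism $f:\A\to\B$ sends a type-$k$ generator of $\A$ to a generator $f(a)$ of $\B$ obeying the very same equation, so that ``$\A$ is of type $k$ with invariant $[\gamma]$'' is honestly isomorphism-invariant; granting this, Lemmas \ref{typek} and \ref{similar} supply the rest. Strictly, this observation is already the content of the forward direction of Lemma \ref{isomorphism}, so even this obstacle is mild, and the proof is in the end a matter of organizing the preceding lemmas into the stated trichotomy.
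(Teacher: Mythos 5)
Your proposal is correct and follows essentially the same route as the paper: split on nilpotency, invoke Lemma \ref{typek} to get a unique type $k$ (distinguishing (2) from (3)), and invoke Lemma \ref{similar} together with Lemma \ref{isomorphism} to identify the isomorphism classes within type $k$ with $\C^{n-k}/\sim$. Your additional care about the zero tuple (excluded from the hypotheses of Lemma \ref{similar}) and about the explicit disjointness of the three families is a mild refinement of the paper's argument rather than a different approach.
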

\begin{proof}
That there is only one $n$-dimensional nilpotent cyclic Leibniz
algebra up to isomorphism follows from Lemma \ref{isomorphism}. Now
assume $\A$ is a non-nilpotent $n$-dimensional cyclic Leibniz
algebra. Then by Lemma \ref{typek}, $\A$ has a generator satisfying
one and only one of \textit{(2)} or \textit{(3)}. Now assume $\A$ has a
generator $a$ satisfying 
$
aa^n = a^k + \alpha_{k+1}a^{k+1}+\cdots + \alpha_na^n.
$
Then by Lemma \ref{similar}, $\A$ also has a generator $b$ satisfying 
$
aa^n = a^k + \alpha_{k+1}'a^{k+1}+\cdots + \alpha_n'a^n
$
if and only if $(\alpha'_{k+1},\ldots,\alpha'_n)\sim (\alpha_{k+1},\ldots,\alpha_n)$.
\end{proof}

We think it worth nothing that for each $k=2,\ldots, n-1$ there is an
$(n-k)$-parameter family of isomorphism classes of cyclic Leibniz algebras of type $k$, the
parameters being chosen from the uncountable set $\C^{n-k}/\sim$. Thus for $n\geq 3$ there are
uncountably many isomorphism classes of cyclic Leibniz algebras of
dimension $n$ over $\C$.

\section{3 and 4-dimensional Classification}
We use the 3 and 4-dimensional cases to illustrate Theorem
\ref{classificationtheorem}:
\begin{cor}[3-dimensional Classification]

Let $\A$ be a 3-dimensional cyclic Leibniz algebra over $\C$. Then
$\A$ is isomorphic to a Leibniz algebra spanned by $\{a,a^2,a^3 \}$
with the product $aa^3$ given by one and only one of the following:
\begin{enumerate}
\item{
$aa^3 = 0$ (nilpotent case).
}
\item{
$aa^3 = a^3$.
}
\item{
$aa^3 = a^2 + \alpha_3 a^3, \ \ \ \ \alpha_3 \in \C/\sim, \ \ $ where
$\alpha \sim \alpha'$ if $\alpha' = \pm \alpha$.
}
\end{enumerate}
\end{cor}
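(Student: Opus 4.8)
\emph{Proof proposal.} The plan is to obtain this corollary by simply specializing Theorem \ref{classificationtheorem} to $n=3$ and then making the equivalence relation $\sim$ explicit in that small case. First I would invoke Theorem \ref{classificationtheorem} directly: it already tells us that every $3$-dimensional cyclic Leibniz algebra $\A$ over $\C$ is isomorphic to an algebra spanned by $\{a,a^2,a^3\}$ whose only nontrivial product $aa^3$ takes exactly one of the three listed forms (1), (2), (3). So the entire task reduces to reading off what each of those forms becomes when $n=3$.

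For form (1), the nilpotent case, there is nothing to compute: $aa^3=0$, and Lemma \ref{isomorphism} already guarantees there is only one such algebra up to isomorphism. For form (2), which is the type $k=n$ case of the theorem, substituting $n=3$ immediately yields $aa^3=a^3$. For form (3), the constraint $2\le k\le n-1$ with $n=3$ forces $k=2$, so the product is necessarily $aa^3 = a^2 + \alpha_3 a^3$, with the tuple $(\alpha_3)$ ranging over $\C^{\,n-k}/\!\sim\, =\, \C^{1}/\!\sim$.

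The one remaining step is to identify $\sim$ on $\C^{1}$. Here $d=n-k=1$, so by the definition preceding Lemma \ref{similar} we have $(\gamma_1)\sim(\gamma_1')$ exactly when $\gamma_1=\omega^{d}\gamma_1'=\omega\gamma_1'$ for some $(d+1)=2$nd root of unity $\omega$. Since the square roots of unity are precisely $\pm1$, this says $\alpha_3\sim\alpha_3'$ if and only if $\alpha_3'=\pm\alpha_3$, which is exactly the relation stated in part (3) of the corollary. Assembling these three observations gives the claim.

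I do not anticipate any genuine obstacle: this is a direct unwinding of Theorem \ref{classificationtheorem}, and the only point deserving care is keeping the index bookkeeping straight between the coefficients $\alpha_{k+1},\ldots,\alpha_n$ and the exponents of $\omega$ in the definition of $\sim$. With $n-k=1$ there is a single surviving parameter $\alpha_3$, it is multiplied by $\omega^{1}$, and so the pertinent roots of unity are the square roots of unity, giving the sign ambiguity $\alpha_3\mapsto-\alpha_3$.
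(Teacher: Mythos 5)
Your proposal is correct and matches the paper exactly: the corollary is stated there purely as an illustration of Theorem \ref{classificationtheorem}, obtained by setting $n=3$ (so $k=2$ is forced in case (3)) and noting that for $d=n-k=1$ the relation $\sim$ reduces to $\alpha_3'=\omega\alpha_3$ with $\omega$ a square root of unity, i.e.\ $\alpha_3'=\pm\alpha_3$. Nothing further is needed.
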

\begin{cor}[4-dimensional Classification]
Let $\A$ be a 4-dimensional cyclic Leibniz algebra over $\C$. Then
$\A$ is isomorphic to a Leibniz algebra spanned by $\{a,a^2,a^3,a^4
\}$ with the product $aa^4$ given by one and only one of the
following:
\begin{enumerate}
\item{
$aa^4 = 0$ (nilpotent case). 
}
\item{
$aa^4 = a^4$.
}
\item{
$aa^4 = a^3 + \alpha_4 a^4, \ \ \ \ \alpha_4 \in \C/\sim, \ \ $ where
$\alpha \sim \alpha'$ if $\alpha' = \pm \alpha$.
}
\item{
$aa^4 = a^2 + \alpha_3 a^3 + \alpha_4 a^4, \ \ (\alpha_3,\alpha_4) \in
\C/\sim, \ \ $ where $(\alpha,\beta) \sim (\alpha',\beta')$ if
\[
(\alpha',\beta') \in
\{(\alpha,\beta),(\omega^2\alpha,\omega\beta),(\omega\alpha,\omega^2\beta)
\text{ }|
 \text{ }\omega=e^{2\pi i /3} 
\}
\]
}
\end{enumerate}
\end{cor}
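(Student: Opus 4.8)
The plan is to obtain this as a direct specialization of Theorem \ref{classificationtheorem} to $n=4$, the only work being to unwind the abstract equivalence relation $\sim$ in each relevant dimension. First I would apply the Classification Theorem with $n=4$: it immediately gives that $\A$ is isomorphic to exactly one of the algebras on $\{a,a^2,a^3,a^4\}$ with $aa^4$ equal to $0$ (matching item (1)), to $a^4$ (the type-$4$ case, i.e.\ case (2) of the theorem with $k=n$, matching item (2)), or to $a^k+\alpha_{k+1}a^{k+1}+\cdots+\alpha_4a^4$ for $k\in\{2,3\}$ with $(\alpha_{k+1},\ldots,\alpha_4)$ taken modulo $\sim$ on $\C^{4-k}$.

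Next I would treat the type-$3$ case, where $d=n-k=1$. Here $\sim$ lives on $\C^1$ and identifies $\alpha_4$ with $\omega\alpha_4$ for $\omega$ any $(d+1)=2$-nd root of unity, i.e.\ $\omega=\pm1$; hence $\alpha_4\sim\alpha_4'$ if and only if $\alpha_4'=\pm\alpha_4$, which is exactly item (3). Then I would treat the type-$2$ case, where $d=n-k=2$. Now $\sim$ on $\C^2$ declares $(\gamma_1,\gamma_2)\sim(\gamma_1',\gamma_2')$ whenever $(\gamma_1,\gamma_2)=(\omega^2\gamma_1',\omega\gamma_2')$ for $\omega$ a cube root of unity. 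Writing $\omega=e^{2\pi i/3}$, the three cube roots are $1,\omega,\omega^2$, and substituting them into $(\alpha_3,\alpha_4)$ yields the three tuples $(\alpha_3,\alpha_4)$, $(\omega^2\alpha_3,\omega\alpha_4)$, and $(\omega^4\alpha_3,\omega^2\alpha_4)$; using $\omega^3=1$ the last simplifies to $(\omega\alpha_3,\omega^2\alpha_4)$. This is precisely the equivalence class displayed in item (4).

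The only step requiring any care is the bookkeeping of exponents of $\omega$ in the type-$2$ case: one must remember that $\sim$ on $\C^{n-k}$ assigns the exponent $\omega^{n-k}$ to the first coordinate and decreases down to $\omega^1$ on the last, and that $\omega^4$ reduces to $\omega$ modulo $\omega^3=1$. Beyond that, the corollary is a mechanical substitution of $n=4$ and $k=2,3,4$ into Theorem \ref{classificationtheorem}, so there is no genuine obstacle.
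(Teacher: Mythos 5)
Your proposal is correct and matches the paper's approach exactly: the paper presents this corollary as a direct illustration of Theorem \ref{classificationtheorem} with $n=4$, and your unwinding of the equivalence relation for $d=1$ (square roots of unity giving $\pm\alpha_4$) and $d=2$ (cube roots of unity giving the three tuples, with $\omega^4=\omega$) is the whole content. Nothing further is needed.
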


\section*{Acknowledgements}
The authors are graduate students at North Carolina State University. Their work was funded by an REG grant from the National Science Foundation. The authors would like to thank Dr. Ernest Stitzinger for his guidance and support.

\bibliography{mainbib}{}
\bibliographystyle{plain}

\end{document}